\newtheorem*{theorem*}{Theorem}
\newtheorem*{proposition*}{Proposition}
\newtheorem*{lemma*}{Lemma}
\newtheorem*{corollary*}{Corollary}
\newtheorem*{remark*}{Remark}
\newtheorem{theorem}{Theorem}
\newtheorem{corollary}[theorem]{Corollary}
\newtheorem{proposition}[theorem]{Proposition}
\theoremstyle{definition}
\newtheoremstyle{assumption}
{\topsep}
{\topsep}
{}
{}
{\itshape}
{}
{.5em}
{\thmname{#1}\thmnumber{ #2}\thmnote{ (#3)}}
\theoremstyle{assumption}
\newtheorem{assumption}{Assumption}
\date{}
\title{A note on étale endomorphisms of normal schemes}
\author{Lázaro Orlando Rodríguez Díaz}
\address{Instituto de Matem\'{a}tica, 
	Universidade Federal do Rio de Janeiro, RJ, Brazil.}
\email{lazarord@im.ufrj.br}	
\begin{document}
	
\begin{abstract}
     We prove that under some extra hypothesis, given an étale endomorphism of a normal irreducible Noetherian and simply connected scheme, if the endomorphism is surjective then it is injective. The additional assumption concerns the possibility of constructing an étale cover out of a surjective étale morphism. We show that in some cases the surjectivity hypothesis can be removed if the intended étale cover is Galois.
\end{abstract}
\maketitle

\section*{}

A remarkable theorem due to Ax and Grothendieck states that an injective endomorphism of an algebraic variety is surjective, \cite[\S 14]{Ax68}, \cite{Ax69}, \cite[Theorem 10.4.11]{EGAIV_3}, \cite[Theorem 17.9.6]{EGAIV_4}. A straightforward converse to this theorem fails as surjective but not injective polynomials endomorphisms can be exhibited, e.g., $x\mapsto x^2$ in $\mathbb{C}$. A sort of converse to the Ax-Grothendieck theorem was  obtained in \cite{McKenna_Dries_90}. The present work arises from an attempt to prove a converse to the Ax-Grothendieck theorem in the case where the endomorphism we started with is étale and the scheme is normal. Even thought we do not succeeded in proving a fairly general converse under that hypothesis alone, we believe that our conditional results could serve as a starting point for further research in that direction.

We proceed to state two assumptions, all the results we obtained are conditioned to one of these. Let $S$ and $S'$ be normal irreducible Noetherian schemes, and let $f:S'\to S$ be a surjective étale morphism:

\begin{assumption}\label{assu1}
	There exists an étale cover $T\to S$ that factors into $T\to S' \to S$, where $T\to S'$ is surjective, and $T$ is an integral scheme.
\end{assumption}

\begin{assumption}\label{assu2}
   There exists a Galois cover $T\to S$ that factors into $T\to S' \to S$, where $T$ is an integral scheme.
\end{assumption}

\begin{proposition}\label{main_proposition}
	Let $S$ be a normal irreducible Noetherian and simply connected scheme. Let $f:S\to S$ be an étale endomorphism. If $f$ is surjective and satisfies the Assumption \ref{assu1}, then $f$ is injective.
\end{proposition}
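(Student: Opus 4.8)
The plan is to use Assumption \ref{assu1} to manufacture a finite étale cover of $S$ that simple connectedness forces to be an isomorphism, and then to extract the injectivity of $f$ from that isomorphism. First I would apply Assumption \ref{assu1} to the surjective étale morphism $f\colon S\to S$, taking $S'=S$. This produces an étale cover $\pi\colon T\to S$ together with a factorization $\pi=f\circ g$ in which $g\colon T\to S$ is surjective and $T$ is integral; I would fix this notation once and for all.

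The crux is to show that $\pi$ is an isomorphism. Since $T$ is integral it is connected, and $\pi$ is a finite étale cover of the simply connected scheme $S$. By the Galois theory of finite étale covers, every such cover of $S$ is trivial, i.e.\ isomorphic over $S$ to a finite disjoint union of copies of $S$; a connected cover must then be a single copy, so the structure morphism $\pi\colon T\to S$ is an isomorphism. (Equivalently, connected finite étale covers correspond to transitive sets under the trivial étale fundamental group, and there is only the one-point set, which gives the degree-one cover.) I expect this to be the step carrying the real content, as it is here that both hypotheses ``$T$ integral'' and ``$S$ simply connected'' are consumed.

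It remains to deduce that $f$ is injective from the two facts that $\pi=f\circ g$ is an isomorphism and that $g$ is surjective. This is a short point-chase: given points $x,y\in S$ with $f(x)=f(y)$, surjectivity of $g$ furnishes $a,b\in T$ with $g(a)=x$ and $g(b)=y$, so that $\pi(a)=f(g(a))=f(x)=f(y)=f(g(b))=\pi(b)$; since $\pi$ is injective, $a=b$, and hence $x=g(a)=g(b)=y$. Therefore $f$ is injective on the underlying point set, completing the proof. I would note that the étaleness of $g$ is never used here — only its surjectivity, provided by Assumption \ref{assu1}, enters — so that the sole genuinely geometric ingredient is the triviality of finite étale covers over a simply connected base.
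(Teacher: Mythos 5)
Your proposal is correct and follows essentially the same route as the paper: invoke Assumption \ref{assu1} to get the factorization $h=f\circ g$ with $T$ integral, use simple connectedness of $S$ to conclude the connected étale cover $h$ is an isomorphism, and then deduce injectivity of $f$ from bijectivity of $h$ and surjectivity of $g$. Your final point-chase is just a slightly more explicit version of the paper's remark that $h$ bijective forces $g$ injective (hence bijective) and consequently $f$ injective.
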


The proof will follow from the hypothesis of simple connectivity as a consequence of our assumption.

\begin{proof}[Proof of Proposition \ref{main_proposition}]
Let $f: S'\to S$ be a surjective étale endomorphism of $S$, since we need to distinguish the domain from the codomain of $f$, here $S'=S$. By the Assumption \ref{assu1}, there exists an integral scheme $T$ and a finite étale morphism $h: T\to S$ such that $h=f\circ g$, where $g: T\to S'$ is surjective. Since $h$ is an étale cover, $T$ is connected and $S$ is simply connected, it follows  that $h$ is an isomorphism \cite[IV, Example 2.5.3]{Hartshorne77}. As $h$ is bijective, then $g$ is injective (hence bijective), and consequently $f$ is injective. 
\end{proof}

In this note $k$ denotes an algebraically closed field of characteristic zero and $\mathbb{A}^{n}_{k}=\operatorname{Spec}k[x_1,\dots,x_n]$ denotes the affine space over $k$. It is well known that $\mathbb{A}^{n}_{k}$ is simply connected, \cite[Theorem 2.9]{Wright81}. The following corollary is immediate from Proposition \ref{main_proposition}.

\begin{corollary}\label{cor_surj_implies_inj}
Let $f:\mathbb{A}^{n}_{k}\to \mathbb{A}^{n}_{k}$ be an étale endomorphism. If $f$ is surjective and satisfies the Assumption \ref{assu1}, then $f$ is injective.
\end{corollary}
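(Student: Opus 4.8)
The plan is to recognize Corollary \ref{cor_surj_implies_inj} as a direct specialization of Proposition \ref{main_proposition}, so the entire task reduces to verifying that the affine space $\mathbb{A}^n_k$ satisfies every standing hypothesis imposed on $S$ there. First I would dispatch the three purely algebraic conditions. Since $k$ is a field, the polynomial ring $k[x_1,\dots,x_n]$ is Noetherian by the Hilbert basis theorem, so $\mathbb{A}^n_k$ is Noetherian; it is a domain, hence $\mathbb{A}^n_k$ is irreducible; and being a unique factorization domain, $k[x_1,\dots,x_n]$ is integrally closed in its fraction field, so $\mathbb{A}^n_k$ is normal. These three facts are standard and demand no real argument.

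The only substantive input is simple connectivity, and this is precisely what has already been quoted from \cite[Theorem 2.9]{Wright81}, namely that $\mathbb{A}^n_k$ has trivial étale fundamental group. Here the standing convention that $k$ is algebraically closed of characteristic zero is exactly what makes this statement available. Once this is in hand, $\mathbb{A}^n_k$ meets all the hypotheses placed on $S$ in Proposition \ref{main_proposition}.

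With these verifications assembled, the conclusion follows by invoking Proposition \ref{main_proposition} directly: given the étale endomorphism $f$ that is surjective and satisfies Assumption \ref{assu1}, the Proposition yields that $f$ is injective. I do not anticipate any genuine obstacle, since all the real content has been absorbed into Proposition \ref{main_proposition} and into the cited simple connectivity of affine space; the corollary is a clean instantiation of the general result rather than a new argument.
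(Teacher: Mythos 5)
Your proposal is correct and follows exactly the paper's route: the paper also treats the corollary as an immediate specialization of Proposition \ref{main_proposition}, citing \cite[Theorem 2.9]{Wright81} for the simple connectivity of $\mathbb{A}^n_k$. Your explicit verification of the Noetherian, irreducible, and normal conditions is just a spelled-out version of what the paper leaves implicit.
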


Looking back to the proof of Proposition \ref{main_proposition}, we can reflect on the role played by the surjectivity hypothesis of the endomorphism $f: S\to S$. After all, we could restrict the codomain of the map to its image $f: S\to f(S)$, to turn the morphism surjective. The problem is that in general $f(S)$ is not simply connected. If we give up of the assumption of surjectivity and suppose $f$ is a separated morphism whose restriction satisfies the Assumption \ref{assu2}, we will prove that the induced extension of fields is Galois, as the following proposition shows.

\begin{proposition}\label{prop_function_fields}
Let $S$ be a normal irreducible Noetherian and simply connected scheme. Let $f:S'=S\to S$ be a separated étale endomorphism of $S$ such that $f:S'=S\to f(S)$ satisfies the Assumption \ref{assu2}. Then the induced field extension $K(S)\subset K(S')$ is a Galois extension. 
\end{proposition}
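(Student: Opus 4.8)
The plan is to exploit the fact that, although $f(S)$ need not be simply connected, the domain $S'$ is \emph{literally} the simply connected scheme $S$, so that any connected finite étale cover of $S'$ is trivial. First I would record the basic function-field picture. Since $f$ is étale it is open, and as $S$ is irreducible its image $f(S)$ is a dense open subscheme; hence $f$ is dominant and induces a finite separable extension $K(S)\hookrightarrow K(S')$, while $f(S)$ is itself normal, irreducible and Noetherian with $K(f(S))=K(S)$. Applying Assumption \ref{assu2} to $f\colon S'\to f(S)$ produces a Galois cover $p\colon T\to f(S)$ with $T$ integral, factoring as $T\xrightarrow{g}S'\xrightarrow{f}f(S)$. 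Because $T$ is integral and normal (being étale over the normal $f(S)$) and $p$ is a Galois cover, the extension $K(T)/K(S)$ is Galois; this is the Galois extension I ultimately want to identify with $K(S')/K(S)$.

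Next I would show that the intermediate map $g\colon T\to S'$ is itself a finite étale cover. Étaleness is formal: $T$ and $S'$ are both étale over $f(S)$, so the $f(S)$-morphism $g$ between them is étale. For finiteness I would invoke the separatedness hypothesis on $f$: since $p=f\circ g$ is finite, hence proper, and $f\colon S'\to f(S)$ is separated, the cancellation law for proper morphisms forces $g$ to be proper, and proper $+$ étale $=$ finite étale. Surjectivity is then immediate, the image of $g$ being open (étaleness) and closed (properness) in the connected scheme $S'$ and nonempty ($T$ being integral). I expect this step to carry the real technical weight, and it is precisely where the separatedness assumption is consumed.

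Finally comes the crucial observation: $S'$ is the same scheme as $S$, so $g\colon T\to S'=S$ is a connected finite étale cover of a simply connected scheme, and is therefore an isomorphism by the same principle used in the proof of Proposition \ref{main_proposition} (cf. \cite[IV, Example 2.5.3]{Hartshorne77}). Consequently $g$ identifies $K(S')$ with $K(T)$ compatibly over $K(S)$, and since $K(T)/K(S)$ is Galois, so is $K(S')/K(S)$. The whole point of the argument is that invoking simple connectivity directly on $f(S)$ is impossible, as $f(S)$ generally fails to be simply connected, whereas routing the cover through $S'=S$ bypasses this obstruction, at the cost of first upgrading $g$ from a morphism of covers over $f(S)$ to an honest finite étale cover of $S$.
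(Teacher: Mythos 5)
Your proposal is correct and follows essentially the same route as the paper: restrict $f$ to its open image $f(S)$, invoke Assumption \ref{assu2} to get the Galois cover $T\to f(S)$ factoring through $S'$, upgrade $g\colon T\to S'$ to a finite étale cover (étaleness by cancellation over $f(S)$, finiteness from separatedness of $f$), and then use simple connectivity of $S'=S$ to conclude $g$ is an isomorphism and transfer the Galois property. The only cosmetic difference is that you obtain finiteness of $g$ via proper cancellation plus ``proper and quasi-finite implies finite,'' where the paper cites the finite-morphism cancellation lemma \cite[Lemma 5.3.2, 1)]{Szamuely09} directly.
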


\begin{proof}[Proof of Proposition \ref{prop_function_fields}]
Let $f:S'=S\to S$ be a separated étale endomorphism of $S$. Since $f$ is étale it is flat, then it is an open map \cite[III, Exercise 9.1]{Hartshorne77}, therefore $f(S)$ is an open subscheme of $S$, in particular, $f(S)$ is a normal irreducible Noetherian scheme and $K(S)=K(f(S))$.  Let's restrict the codomain of $f$ to its image and rename the map $f$ as $\tilde{f}:S'\to f(S)$. The morphism $\tilde{f}$ is étale and surjective, then since $\tilde{f}$ satisfies the Assumption \ref{assu2} there exists an integral scheme $T$ and a finite étale morphism $h: T\to f(S)$ such that $h=\tilde{f}\circ g$, where $g: T\to S'$, moreover, since $h$ is a Galois cover we have that $K(f(S))\subset K(T)$ is a Galois extension. As $h=\tilde{f}\circ g$ is étale and $\tilde{f}$ is étale, it follows that $g$ is étale \cite[Proposition 17.3.4]{EGAIV_4}. Note that $\tilde{f}$ is separated because $f$ is separated \cite[II, Corollary 4.6, f) ]{Hartshorne77}. On the other hand, as $h=\tilde{f}\circ g$ is finite and $\tilde{f}$ separated, it follows that $g$ is finite \cite[Lemma 5.3.2, 1)]{Szamuely09}. Then we have that $g$ is finite étale, i.e., $g: T\to S'$ is an étale cover. We know that $S'$ is simply connected and $T$ is connected, then $g$ is an isomorphism, in particular $K(S')=K(T)$. Since $K(f(S))\subset K(T)$ is a Galois extension, we conclude that $K(S)\subset K(S')$ is a Galois extension as well.
\end{proof} 

The above proposition is particularly useful if we recall the following theorem proved by Campbell for $k=\mathbb{C}$ \cite{Campbell73}; for an arbitrary algebraically closed field of characteristic zero the proof can be found in \cite[Theorem 21.11]{Abhyankar77}, \cite[Theorem 2]{Razar79} and \cite[Theorem 3.7]{Wright81}.

\begin{theorem}{\cite{Campbell73}}\label{Campbell_theo}
Let $S'=\mathbb{A}^{n}_{k}\to S=\mathbb{A}^{n}_{k}$ be an étale endomorphism. Then $f$ is an isomorphism if and only if the induced field extension $K(S)\subset K(S')$ is a Galois extension. 
\end{theorem}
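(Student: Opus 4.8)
The plan is to prove the nontrivial implication, that a Galois function-field extension forces $f$ to be an isomorphism; the converse is immediate, since if $f$ is an isomorphism then $K(S)=K(S')$ and the extension is trivially Galois. Write $A=k[x_1,\dots,x_n]$ for the coordinate ring of $S$, $B=k[y_1,\dots,y_n]$ for that of $S'$, $K=K(S)=\operatorname{Frac}(A)$ and $L=K(S')=\operatorname{Frac}(B)$, so that $f$ corresponds to an inclusion $A\hookrightarrow B\subseteq L$ and, $f$ being étale, $L/K$ is a finite separable extension of degree $d=\deg f$. Assuming $L/K$ is Galois with group $G$, I would first form $A'$, the integral closure of $A$ in $L$. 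Since $A$ is normal with $A=A'\cap K$ and $G$ fixes $A$, the group $G$ acts on $A'$ with $(A')^{G}=A$, so $\nu\colon \operatorname{Spec}A'\to S$ is a finite surjective morphism realizing $S$ as the quotient $\operatorname{Spec}A'/G$. Because $B$ is normal with fraction field $L$ and contains $A$, it contains $A'$, and Zariski's Main Theorem presents $f$ as $S'\hookrightarrow \operatorname{Spec}A'\xrightarrow{\nu} S$ with the first map an open immersion onto a (non $G$-stable) open set. With this set-up the whole problem reduces to showing that the finite Galois cover $\nu$ is étale: granting that, simple connectivity of $S=\mathbb{A}^n_k$ forces $\nu$ to be an isomorphism exactly as in the proof of Proposition \ref{main_proposition}, whence $A'=A$, $L=K$ and $d=1$.

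The core of the argument is therefore a ramification computation for $\nu$. By the Zariski–Nagata purity of the branch locus, $\nu$ (finite, with $\operatorname{Spec}A'$ normal and $S$ regular) is étale as soon as it is unramified over every prime divisor $D\subset S$, so I only need to control codimension-one ramification. For a prime divisor $D$ meeting the image $U=f(S')$, the fibre of $f$ over the generic point of $D$ is nonempty, so some extension to $L$ of the divisorial valuation $v_D$ is centred on $S'$, where $f$ is étale; that extension has ramification index $1$, and since $L/K$ is Galois all extensions of $v_D$ share the same index, so $\nu$ is unramified over every such $D$. The remaining, and decisive, case is that of prime divisors $D\subseteq Z:=S\smallsetminus U$ lying in the complement of the image, over which every point of $\operatorname{Spec}A'$ sits in the boundary $\operatorname{Spec}A'\smallsetminus S'$ and the preceding "étale witness" argument is unavailable.

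The main obstacle is exactly this possible ramification at infinity: I must rule out divisorial ramification of $\nu$ over $Z$. Equivalently, one has to show that the Galois hypothesis forces $f$ to be proper — so that $f$ itself is a finite étale cover and $Z$ is empty — or, in deck-transformation language, that every element of $G$ extends from a birational self-map of $S'$ over $S$ to a regular automorphism of $\mathbb{A}^n_k$, the failure of which is precisely the escape of sheets to infinity. I expect to establish this by combining the Galois uniformity of ramification indices with the constancy of the Jacobian determinant (an étale endomorphism of $\mathbb{A}^n_k$ has $\det Jf\in k^{\times}$) to exclude codimension-one components of $Z$, thereby making $Z$ of codimension at least two and vacating the bad case. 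Once $\nu$ is shown to be étale and hence an isomorphism, $f$ becomes a degree-one étale morphism, i.e. an open immersion $\mathbb{A}^n_k\hookrightarrow \mathbb{A}^n_k$; a final, routine comparison of global units (the coordinate ring of a proper affine open $U=\mathbb{A}^n_k\smallsetminus V(h)$ has unit group strictly larger than $k^{\times}$, whereas a polynomial ring does not, and the complement of an affine open is pure of codimension one) forces $U=S$, so that $f$ is surjective and therefore an isomorphism.
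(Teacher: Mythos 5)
The paper itself gives no proof of this theorem: it is quoted from the literature (Campbell, with algebraic proofs in Abhyankar, Razar and Wright), so your attempt can only be measured against those known arguments and on its own merits. Your setup is correct and is indeed the standard frame: the trivial direction; the integral closure $A'$ of $A$ in $L$ with its $G$-action and $(A')^{G}=A$; the Zariski Main Theorem factorization of $f$ as an open immersion $S'\hookrightarrow \operatorname{Spec}A'$ followed by the finite map $\nu$; the reduction via Zariski--Nagata purity to unramifiedness in codimension one; and the Galois-conjugacy argument showing $\nu$ is unramified over every prime divisor whose generic point lies in $U=f(S')$ (this uses that all extensions of $v_D$ to $L$ are conjugate, hence share the ramification index of the étale witness coming from $S'$). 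The closing steps — finite étale over the simply connected $\mathbb{A}^n_k$ implies $\nu$ is an isomorphism, hence $f$ is an open immersion, and the unit-group comparison forcing a proper affine open $D(h)\subsetneq\mathbb{A}^n_k$ to be impossible as an isomorphic copy of $\mathbb{A}^n_k$ — are also sound.

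However, there is a genuine gap exactly where you flag it: the exclusion of branch divisors contained in $Z=S\smallsetminus U$. For this, the decisive case, you offer no argument at all, only the statement that you ``expect to establish this by combining the Galois uniformity of ramification indices with the constancy of the Jacobian determinant'' so as to force $\operatorname{codim}Z\geq 2$. That expectation is not a proof, and it is not a routine verification: ruling out ramification of $\nu$ over divisors at infinity (equivalently, showing the sheets of $f$ do not escape to infinity over a hypersurface) is precisely the hard core of Campbell's theorem, the part to which Campbell's covering-space argument over $\mathbb{C}$ and the algebraic treatments of Razar, Wright and Abhyankar devote their real work. Note also two structural worries with the suggested completion. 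First, the constancy of $\det Jf$ is just a restatement of the étaleness of $f$ on $S'$ (triviality of the different of $B/A$), which you have already used; it gives no visible control on the ramification of $A'$ at height-one primes whose centre lies outside $S'$, which is what the bad case is about. Second, your alternative formulation — ``show that the Galois hypothesis forces $f$ to be proper'' — is essentially equivalent to the theorem itself (a proper étale endomorphism of $\mathbb{A}^n_k$ is a finite étale cover of a simply connected scheme, hence an isomorphism), so without an independent mechanism this reformulation risks circularity. As it stands, the proposal is an accurate road map whose critical bridge is missing.
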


The following corollary is a direct consequence of Proposition \ref{prop_function_fields} and Campbell's Theorem \ref{Campbell_theo}, since any morphism of affine schemes is separated \cite[II, Proposition 4.1]{Hartshorne77}. It shows that the surjectivity hypothesis in Corollary \ref{cor_surj_implies_inj} could be removed. 

\begin{corollary}
Let $f:\mathbb{A}^{n}_{k}\to \mathbb{A}^{n}_{k}$ be an étale endomorphism such that $f:\mathbb{A}^{n}_{k}\to f(\mathbb{A}^{n}_{k})$ satisfies the Assumption \ref{assu2}. Then $f$ is an isomorphism.
\end{corollary}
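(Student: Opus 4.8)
The plan is to chain together the two principal results already established, namely Proposition \ref{prop_function_fields} and Campbell's Theorem \ref{Campbell_theo}, so essentially no new argument is required beyond verifying that $\mathbb{A}^{n}_{k}$ satisfies the running hypotheses of Proposition \ref{prop_function_fields}. First I would record that $\mathbb{A}^{n}_{k}=\operatorname{Spec}k[x_1,\dots,x_n]$ is normal (being a polynomial ring over a field, hence a regular, in particular integrally closed, domain), irreducible and Noetherian, and that it is simply connected by \cite[Theorem 2.9]{Wright81}. Thus $\mathbb{A}^{n}_{k}$ plays the role of the scheme $S$ appearing in Proposition \ref{prop_function_fields}.

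Next I would check the two remaining input conditions. The endomorphism $f$ is étale by assumption; and since it is a morphism between affine schemes, it is automatically separated by \cite[II, Proposition 4.1]{Hartshorne77}. Together with the standing hypothesis that the restriction $f:\mathbb{A}^{n}_{k}\to f(\mathbb{A}^{n}_{k})$ satisfies Assumption \ref{assu2}, this places us exactly in the situation of Proposition \ref{prop_function_fields}, with $S'=S=\mathbb{A}^{n}_{k}$. Applying that proposition then yields that the induced field extension $K(S)\subset K(S')$ is Galois.

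Finally I would invoke Campbell's Theorem \ref{Campbell_theo}: for an étale endomorphism of $\mathbb{A}^{n}_{k}$, being an isomorphism is equivalent to the induced extension of function fields being Galois. Since the previous step supplies precisely this Galois condition, we conclude that $f$ is an isomorphism. I do not expect any genuine obstacle here, since the substantive work—constructing the étale cover $g:T\to S'$ from Assumption \ref{assu2}, showing it is finite étale, and using simple connectivity to force $g$ to be an isomorphism so that $K(S')=K(T)$ is Galois over $K(f(S))=K(S)$—has already been carried out in the proof of Proposition \ref{prop_function_fields}. The only point deserving a moment's care is confirming that $\mathbb{A}^{n}_{k}$ is normal and simply connected and that affineness supplies the separatedness hypothesis, after which the two cited theorems combine immediately.
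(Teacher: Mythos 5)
Your proposal is correct and follows exactly the paper's own route: the paper likewise notes that $\mathbb{A}^{n}_{k}$ is normal, irreducible, Noetherian and simply connected, that any morphism of affine schemes is separated \cite[II, Proposition 4.1]{Hartshorne77}, and then concludes by combining Proposition \ref{prop_function_fields} with Campbell's Theorem \ref{Campbell_theo}. No gaps; your verification of the hypotheses is just a slightly more explicit write-up of the same two-step argument.
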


\bibliographystyle{abbrv}
\bibliography{bibliografia}

\end{document}